\documentclass[10pt]{article}

\usepackage{amssymb}
\usepackage{amsmath}
\usepackage{amsthm}
\usepackage{graphicx}
\usepackage{siunitx}
\usepackage{mathtools}
\usepackage{tikz}
\usetikzlibrary{arrows,calc}
\usetikzlibrary{shapes,arrows,shadows,arrows.meta,decorations.pathreplacing,angles,quotes}
\usepackage{bm}
\usepackage{booktabs}
\usepackage{pgfplots}
\usepackage{float}

\setlength{\topmargin}{0cm} \setlength{\oddsidemargin}{0cm}
\setlength{\leftmargin}{2.75cm} \setlength{\rightmargin}{2.25cm}
\setlength{\evensidemargin}{0cm} \setlength{\textheight}{20.0cm}
\setlength{\textwidth}{14cm}

\newtheorem{theorem}{Theorem}[section]
\newtheorem{lemma}{Lemma}[section]
\newtheorem{example}{Example}[section]

\newcommand{\R}{\mathbb{R}}
\newcommand{\N}{\mathbb{N}}
\newcommand{\cU}{\mathcal{U}}
\DeclareMathOperator*{\argmin}{arg\,min}
\newcommand{\eins}{\mbox{1\hskip-0.24em l}}
\def\T{^{\sf T}}
\newcommand{\Oh}{\mathcal{O}}

\author{D.~Frenzel, J.~Lang}
\title{A Third-Order Weighted Essentially Non-Oscillatory Scheme in Optimal Control
Problems Governed by Nonlinear Hyperbolic Conservation Laws}
\author{
David Frenzel\\
{\small \it Technical University of Darmstadt, Department of Mathematics,} \\
{\small Dolivostra{\ss}e 15, 64293 Darmstadt, Germany}\\
{\small frenzel@gsc.tu-darmstadt.de} \\ \\
Jens Lang\footnote{corresponding author, ORCID: 0000-0003-4603-6554}\\
{\small \it Technical University of Darmstadt, Department of Mathematics,} \\
{\small Dolivostra{\ss}e 15, 64293 Darmstadt, Germany}\\
{\small lang@mathematik.tu-darmstadt.de}}
%\date{January 19, 2017}
\begin{document}
\maketitle

\begin{abstract}
The weighted essentially non-oscillatory (WENO) methods are popular and effective spatial discretization methods for nonlinear hyperbolic partial differential equations. Although
these methods are formally first-order accurate when a shock is present, they
still have uniform high-order accuracy right up to the shock location. In this paper,
we propose a novel third-order numerical method for solving optimal control problems subject to scalar nonlinear hyperbolic conservation laws. It is based on the first-disretize-then-optimize
approach and combines a discrete adjoint WENO scheme of third order with the classical strong stability preserving three-stage third-order Runge-Kutta method SSPRK3. We analyze its approximation properties and apply it to optimal control problems of tracking-type with non-smooth target states. Comparisons to common first-order methods such as the Lax-Friedrichs and Engquist-Osher method show its great potential to achieve a higher accuracy along with good resolution around discontinuities.\\
\end{abstract}

\noindent {\bf Keywords}: nonlinear optimal control, discrete adjoints, hyperbolic conservation laws, WENO schemes, strong stability preserving Runge-Kutta methods\\

\noindent {\bf AMS subject classifications}: 34H05, 49M25, 65L06, 65M22

\section{Introduction}
We consider the optimal control problem
\begin{flalign}\label{problem}
u_0^{min} = \argmin_{u_0 \in \cU_{ad}} J(y(T,\cdot;u_0),y_d)
\end{flalign}
with the tracking-type functional
\begin{flalign}\label{cF}
J(y(T,\cdot;u_0),y_d) = &\;\int_{I} G(y(T,x;u_0),y_d(x))\, dx,
\end{flalign}
where
\begin{flalign}\label{{LSCost}}
G(y(T,x;u_0),y_d(x)) = &\; \frac{1}{2} |y(T,x;u_0)-y_d(x)|^2
\end{flalign}
and
$y=y(T,x;u_0)$ is the scalar entropy solution at the final time $T>0$
of the nonlinear hyperbolic conservation law (later referred to as state equation)
\begin{equation}
\begin{aligned}\label{hypEqIntro}
\partial_ty + \partial_xf(y) = &\; 0, \quad\quad (t,x) \in \Omega_T \coloneqq (0,T] \times \R, \\[1mm]
y(0,x) = &\; u_0(x), \ x \in \R .
\end{aligned}
\end{equation}
Here, $u_0\in\cU_{ad}\subseteq L^{\infty}(\R)$ is the control
and $y_d \in L^2(\R)$ denotes a given target towards which we strive to optimize. We
assume that the flux function satisfies $f \in C^m(\R)$ with sufficiently large $m\in\N$
and is convex, the admissible set $\cU_{ad}$ is non-empty, convex and closed, and
the region of integration $I$ in (\ref{cF}) is a bounded interval.
Weak solutions to (\ref{hypEqIntro}) are in general not unique, which implies that the
physically relevant solution has to be chosen. As a fact we cite the well-known result
from \cite{Kruzkov1970}, which states that for $u_0 \in L^{\infty}(\R) \cap BV(\R)$
there exists a unique entropy solution in the sense of Kr\v{u}zkov in the class
$C([0,T],L_{loc}^1(\R)) \cap L^{\infty}(\R \times [0,T])$.

In this work, we focus on the numerical treatment of optimal control problems (\ref{problem})
governed by hyperbolic conservation laws, which has been studied amongst others in
\cite{AguilarSchmittUlbrichMoos2019,BandaHerty2012,CastroPalaciosZuazua2008,ChertockHertyKurganov2014,
Giles2003,GilesUlbrich2010a,GilesUlbrich2010b,
HajianHintermuellerUlbrich2017,HertyKurganovKurochkin2015,
HertyKurganovKurochkin2018,Kurochkin2015,LecarosZuazua2014,LecarosZuazua2016,
Ulbrich2001,Ulbrich2002,Ulbrich2003}.
We will follow the \textit{first-disretize-then-optimize} approach, i.e., equation (\ref{hypEqIntro})
is first discretized in space and time by applying a weighted essentially non-oscillatory (WENO) scheme and a strong stability
preserving Runge-Kutta (SSPRK) method. This leads to a finite dimensional optimal control
problem, for which the first-order discrete optimality system can be derived and solved
by existing optimization solvers such as nonlinear Newton-type algorithms. In spite of the
large size of the resulting problems, the flexibility of this approach naturally allows the
incorporation of additional constraints and bounds. Further advantages are the direct use
of automatic differentiation techniques and the computation of discrete adjoints, which are
consistent with the discrete optimal control problem. Symmetric approximations of
Hessian matrices can be easily derived and result in a computational speedup.

The application of common methods from nonlinear optimization requires the computation
of directional derivatives of the target functional $J$ with respect to the control.
An efficient computation
of the gradient can be effectuated by using the so-called adjoint approach, in which
the derivative is represented via the adjoint state. The crucial issue of hyperbolic
conservation laws is the possible formation of shocks even for smooth initial data,
for which reason the classical adjoint calculus does not apply. To overcome these
difficulties, nonstandard variational concepts have been developed in
\cite{BressanMarson1995,Ulbrich2001,Ulbrich2003}, which incorporate the shock sensitivity
in order to derive rigorous optimality conditions. The resulting non-conservative equation
has been studied in \cite{BouchutJames1998,Conway1967,Ulbrich2001}. Their numerical
resolution is intricate, since the interior boundary condition defined on a set of
Lebesgue measure zero -- existing for the continuous setting -- is not present for the
discrete counterpart. This inherent problem has been addressed in \cite{AguilarSchmittUlbrichMoos2019,Giles2003,GilesUlbrich2010a,GilesUlbrich2010b}.
The theory is, however, restricted to differentiable monotone schemes which have
sufficiently large numerical diffusion and are of first order only.

To avoid unwanted smearing of the solution by large numerical diffusion and to overcome the lower
order restriction of monotone schemes, we propose a novel approach based on WENO schemes introduced in \cite{JiangShu1996,LiuOsherChan1994,Shu1998}. These schemes
have been proven to approximate hyperbolic equations comprising both shocks and complex smooth solution structure with higher accuracy and adequate stability along with good resolution around discontinuities. Although these methods are formally first-order accurate when a shock is present, they still have uniform high-order accuracy right up to the shock location.
By employing a global flux-splitting, the numerical flux function becomes classically differentiable
and therefore allows to develop discrete adjoint WENO methods of higher order. Since the
third-order WENO method is often applied in applications, we consider this method in the context
of optimal control in more detail. We prove that the discrete adjoint WENO3 method is third-order consistent in space for smooth solutions. A fully discrete method is derived by applying a third-order SSPRK method.
We present numerical results and study the approximation behaviour of the adjoint WENO3 scheme.
Finally, we solve an optimal control problem with discontinuous target and compare the performance
of our novel scheme to common first-order schemes such as the modified Lax-Friedrichs and the Engquist-Osher scheme. Further examples can be found in \cite{Frenzel2020}.

\section{Adjoint Equation and Reversible Solutions}
In this section, we briefly recall some theoretical basics in order to set up appropriate
adjoint equations for hyperbolic conservation laws. As pointed out in \cite[Example\,1]{BressanMarson1995},
the solution operator $S_t: u_0 \mapsto y(t,\cdot;u_0)$ is generically not differentiable in
$L_{loc}^1(\R)$, for which reason the classical adjoint calculus does not apply. However, in \cite{Ulbrich2001} it has been shown that entropy solutions to hyperbolic conservation laws admit a generalized differentiable structure called \textit{shift-differentiability}. Under suitable assumptions, a generalized Taylor expansion in $L_{loc}^1$ of the form
\begin{flalign}
y(t, \cdot ; u_0 +\delta u_0) =y(t,\cdot ; u_0)+ S_y^{(x_i)}(T_{u_0}(\delta u_0))(\cdot)
+o(\| \delta u_0 \|_{L^{\infty}(\R)} )
\end{flalign}
exists for all $\delta u_0 \in L^{\infty}(\R)$,
where $T_u: \delta u \in L^\infty(\R) \mapsto (\delta y^T, \delta x_1,...,\delta x_N) \in L^r(I)\times\R^N$,
$r\in (1,\infty]$, is a bounded linear operator and $S_y^{(x_i)}$ is the shift variation defined by
\begin{flalign}
S_y^{(x_i)}(\delta y^T, \delta x_1,\ldots,\delta x_N)(x) = \delta y^T(x)+ \sum_{i=1}^N (y(T,x_i-)-y(T,x_i+))\,\text{sign}(\delta x_i) \eins_{\Omega_i},
\end{flalign}
where $\Omega_i=[\min(x_i,x_i+\delta x_i),\max(x_i,x_i+\delta x_i)]$ and
$x_1,...,x_N$ denote the locations of the down-jumps of the entropy solution.
The important advantage of shift-variations is that this framework allows to develop
an adjoint calculus for hyperbolic conservation laws by using an \textit{averaged sensitivity equation} which avoids the linearization of (\ref{hypEqIntro}) in the usual way, see \cite{Ulbrich2003}
for further details. The directional derivative of $J$ in (\ref{cF}) in the direction of $\delta u_0$ can then be represented by
\begin{flalign}\label{gradient}
\partial_{u_0}J(y(T,\cdot;u_0),y_d) \, \delta u_0(\cdot ) = \int_I p(0,x)\, \delta u_0(x)\, dx,
\end{flalign}
where $p$ is the solution of the adjoint equation
\begin{equation}
\begin{aligned}\label{revSol}
\partial_tp + f'(y)\partial_xp = &\; 0, \quad\quad (t,x) \in \Omega_T, \\
p(T,x) = &\; p^T(x), \ x \in \R .
\end{aligned}
\end{equation}
Here, $p^{T}(x)$ is given by
\begin{equation}
\begin{aligned}\label{averagedEndData}
p^T(x) = \left\{\begin{array}{ll}
\displaystyle\frac{[G(y(T,x),y_d(x))]}{[y(T,x)]}, & x \in X_s , \\[5mm]
\partial_yG(y(T,x),y_d(x)), & \text{otherwise}, \end{array}\right.
\end{aligned}
\end{equation}
where $X_s$ is the set of locations where $y(T,\cdot)$ possesses a shock and $[w(x)] \coloneqq w(x-)-w(x+)$, which naturally incorporates the shock sensitivity.

Equation (\ref{revSol}) is a linear transport equation with, in general, discontinuous coefficients. It  admits multiple solutions, which requires the selection of the correct adjoint state. This is achieved by so-called \textit{reversible solutions} that are defined along generalized characteristics \cite{Dafermos1977}. An illustrative demonstration is given in Example~\ref{exRevCons}. Under suitable technical assumptions and for appropriate end data $p^T$ it can be shown that there exists a unique reversible solution to (\ref{revSol}) that is bounded, $L^{\infty}$-stable, and $TV$-stable
\cite[Theorem 4.2.10 and Corollary 4.2.11]{Ulbrich2001}. In what follows, we will work with formulation (\ref{revSol}) to derive a discrete adjoint WENO3 method.

\begin{example}\label{exRevCons}
Let $f(y)= \frac{1}{2} y^2$, $u_0(x)=-\text{sign}(x)$, $T=0.5$, and $y_d(x)=0$ with $x \in \R$. It is well-known that the unique entropy solution is given by $y(t,x)=-\text{sign}(x)$, $t \in [0,T]$, and
hence we have
\begin{flalign}
p^{T}(x)= \left\{\begin{array}{ll} 0, & x = 0, \\
         -\text{sign}(x), & x \neq 0. \end{array}\right.
\end{flalign}
The area that is not occupied by the classical characteristics is called
{\normalfont shock funnel}. It is represented by the grey-coloured triangle
in Fig.~\ref{fig:M1}. In this region, $p$ takes the constant value zero. The
adjoint remains constant along the classical backwards characteristics outside
this region. Hence, the reversible solution $p$ is given by
\begin{flalign}\label{exAdjointt}
p(0,x) = \begin{cases}
\quad 1 & ,  x<-\frac{1}{2}, \\[2mm]
\quad 0 & , -\frac{1}{2} \leq  x \leq \frac{1}{2}, \\[2mm]
\,- 1 & ,  \frac{1}{2}<x.
\end{cases}
\end{flalign}

\begin{figure}[ht]
\centering
\begin{tikzpicture}[scale=4.5]
    % Draw axes
    \draw [<->,thick] (0,1) node (yaxis) [above] {$t$}
        |- (-1.25,0)--(1.25,0) node (xaxis) [right] {$x$};
    \foreach \x/\xtext in {-1, -0.5, 0, 0.5, 1}
       \draw[shift={(\x,0)}] (0pt,1pt) -- (0pt,-1pt) node[below] {$\xtext$};
    % Draw two intersecting lines
   % \draw (0,0) coordinate (a_1) -- (2,1.8) coordinate (a_2);
    \draw[->, thick] (0,0.5) coordinate (b_1) -- (0.5,0) coordinate (b_2);
    \draw[->, thick] (0,0.5) coordinate (c_1) -- (-0.5,0) coordinate (c_2);
    \draw[dashed] (-1.25,0.5) coordinate (a_1) -- (1.25,0.5) coordinate (a_2);
    \draw[->, thick] (0.25,0.5) coordinate (c_1) -- (0.75,0) coordinate (c_2);
    \draw[->, thick] (0.5,0.5) coordinate (c_1) -- (1,0) coordinate (c_2);
    \draw[->, thick] (-0.25,0.5) coordinate (c_1) -- (-0.75,0) coordinate (c_2);
    \draw[->, thick] (-0.5,0.5) coordinate (c_1) -- (-1,0) coordinate (c_2);
   % \draw [decorate,decoration={brace,amplitude=10pt},xshift=-4pt,yshift=0pt]
%(0,0.5) -- (-1,5.0) node [black,midway,xshift=-0.6cm]
    % Calculate the intersection of the lines a_1 -- a_2 and b_1 -- b_2
    % and store the coordinate in c.
    \draw[decoration={brace,mirror,raise=5pt},decorate]
  (0,0.5) -- node[above=9pt] {$p(0.5,x)=1$} (-1,0.5);
   \draw[decoration={brace,raise=5pt},decorate]
    (0,0.5) --  node[above=9pt] {$p(0.5,x)=-1$} (1,0.5);
    \draw[fill=gray]  (-0.5,0) -- (0,0.5) -- (0.5,0) -- cycle;
    \node[draw,align=left] at (0,0.2) {p(t,x)=0};
    \coordinate (c) at (0,0.5);
    % Draw lines indicating intersection with y and x axis. Here we use
    % the perpendicular coordinate system
    % Draw a dot to indicate intersection point
    \fill[black, label={60:$0$}] (c) circle (1pt);
\end{tikzpicture}
\parbox{12cm}{
\caption{Construction of the reversible solution $p(t,x)$ from the end data $p^T(x)$ at $T=0.5$.
The shock funnel region is accentuated as grey-colored triangle.}
\label{fig:M1}
}
\end{figure}
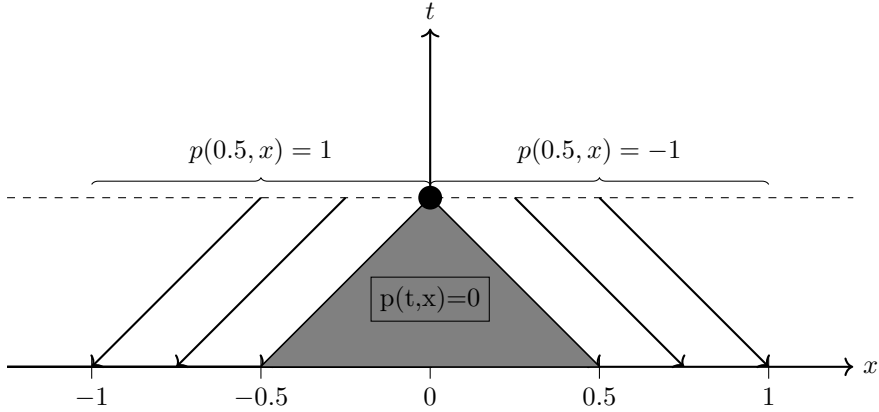

\end{example}

\section{Discrete Adjoint WENO3 Method}
In order to discretize (\ref{hypEqIntro}) in space, we now consider solutions with compact support
$[a,b]$ for the entire time interval $[0,T]$. Thus, using as many points outside as needed and the
compactness of the solution, the implementation of the zero boundary condition does not bear any
difficulty. The interval $[a,b]$ is partitioned into subintervals $[x_{j-1/2},x_{j+1/2}]$ of the same
size $\Delta x$ and with midpoints $x_{j}$ for $j=1,\ldots,N$.
Setting $\bm{u}_{0}:=(u_0(x_1),\ldots,u_0(x_N))\T$ and defining spatial approximations
$\bm{y}(t):=(y_1(t),....,y_N(t))\T$ with $y_j(t) \approx y(t,x_j)$, a spatial semi-discretization of
(\ref{hypEqIntro}) reads
\begin{equation}\label{semiWENO3}
\bm{y}'(t) = -F_{\Delta x} (\bm{y}(t)), \quad
\bm{y}(0) = \bm{u}_{0} \in \R^N,
\end{equation}
where the nonlinear operator $F_{\Delta x}: \R^N \to \R^N$ represents the discretization of
$\partial_xf(y)$. We choose a conservative finite difference
\begin{flalign}\label{spatialDis}
(F_{\Delta x}(\bm{y}(t)))_j = \frac{1}{\Delta x}
\Bigl(\hat{f}_{j+\frac{1}{2}}- \hat{f}_{j-\frac{1}{2}} \Bigr),
\end{flalign}
where $\hat{f}_{j+1/2}: \R^m \to \R$ denotes the numerical flux at $x_{j+1/2}$,
which is (at least) a Lipschitz continuous function of $m$ neighboring values $y_i(t)$.
In order to avoid the convergence of the scheme towards entropy violating solutions,
we apply a global flux splitting
\begin{flalign}\label{fs}
\hat{f}_{j+\frac{1}{2}} = \hat{f}^{+}_{j+\frac{1}{2}} + \hat{f}^{-}_{j+\frac{1}{2}}.
\end{flalign}
Using the simple Lax-Friedrichs splitting $f^{\pm}(y)=(f(y) \pm \alpha y)/2$ with
$\alpha \coloneqq \max_{u} |f'(u)|$ yields the desired properties
$(f^+)'(y)\ge 0$ and $(f^-)'(y)\le 0$. Then, the numerical flux functions of
the WENO3 method \cite{Shu1998} are defined by
\begin{flalign}
\label{posFlux}
\hat{f}^+_{j+\frac{1}{2}}(y_{j-1},y_j,y_{j+1}) \coloneqq &\;
\omega_1^+ \Bigl(-\frac{1}{2} f^+(y_{j-1}) + \frac{3}{2} f^+(y_j) \Bigr)
+ \omega_2^+ \Bigl(\frac{1}{2} f^+(y_j) + \frac{1}{2} f^+(y_{j+1}) \Bigr),\\[2mm]
\label{negFlux}
\hat{f}^-_{j-\frac{1}{2}}(y_{j-1},y_j,y_{j+1}) \coloneqq &\;
\omega_2^- \Bigl(-\frac{1}{2} f^-(y_{j+1}) + \frac{3}{2} f^-(y_j) \Bigr)
+ \omega_1^- \Bigl(\frac{1}{2} f^-(y_{j}) +  \frac{1}{2} f^-(y_{j-1}) \Bigr),
\end{flalign}
where the weights are
\begin{flalign}\label{weights}
\omega_m^\pm =\frac{\tilde{\omega}_m^\pm}{\sum_{i=1,2} \tilde{\omega}_i^\pm},
\quad \tilde{\omega}_m^\pm = \frac{\gamma_m^\pm}{(\varepsilon + \beta_m^\pm)^2}, \quad m=1,2.
\end{flalign}
The smoothness indicators are given by
\begin{flalign}\label{smoothind}
\beta_1^\pm=(f^\pm(y_j)-f^\pm(y_{j-1}))^2, \ \beta_2^\pm= (f^\pm(y_{j+1})-f^\pm(y_j))^2
\end{flalign}
and the linear weights are set to $\gamma_1^+\!=\!\gamma_2^-\!=\!1/3,\;\gamma_1^-\!=\!\gamma_2^+\!=\!2/3$.
Note that $0<\varepsilon \ll 1$ is chosen in order to avoid that the denominator becomes
zero. It is set to $\varepsilon\!=\!10^{-6}$ in our numerical calculations. We would like to
emphasize the observation that by construction the numerical fluxes $\hat{f}^\pm$ have the same smoothness
dependency on its arguments as that of the physical flux function $f(y)$.

Next we will derive the associated adjoint WENO3 scheme. Let $f\in C^2(\R)$, i.e.,
there exists the Fr\'{e}chet derivative of $F_{\Delta x}$ defined in (\ref{spatialDis}).
The continuous optimal control problem is approximated by
\begin{flalign}\label{discproblem}
\bm{u}_0^{min} = \argmin_{\bm{u}_0 \in U_{ad}} \sum_{j=1}^{N} G(y_j(T),y_d(x_j)),
\end{flalign}
where $U_{ad}=\{\bm{u}\in\R_N:\,\text{TV}(\bm{u})\le C\}$ is the discrete admissible set.
Then, applying the common Lagrangian approach in $\R^N$ with multipliers
$\bm{p}(t)=(p_1(t),\ldots,p_N(t))\T$, the adjoint equation to
(\ref{semiWENO3}) reads
\begin{equation}\label{adjsemiWENO3}
\bm{p}'(t) = \nabla_{\bm{y}}F_{\Delta x} (\bm{y}(t))\T\,\bm{p}(t), \quad
\bm{p}(T) = \left(\partial_yG(y_j(T),y_d(x_j))\right)_{j=1,\ldots,N},
\end{equation}
where $\nabla_{\bm{y}}F_{\Delta x}$ is the Fr\'{e}chet derivative of $F_{\Delta x}$ and
gradients are treated as row vectors. The initial condition (the adjoint equation works
backwards in time) is the discrete counterpart to (\ref{averagedEndData}). Observe that the
interior boundary condition does not appear here. A short calculation yields the
componentwise description
\begin{equation}\label{adjsemiWENO3comp}
p_j'(t) = \frac{1}{\Delta x}\sum_{i=-2}^{2} \partial_{y_j}L_{i,j}(\bm{y}(t))\,p_{j+i}(t),
\quad j=1,\ldots,N,
\end{equation}
with the coefficients
\begin{equation}\label{coeffLij}
\begin{aligned}
L_{-2,j}(\bm{y}) = &\; \hat{f}^{-}_{j-3/2}, \\[1mm]
L_{-1,j}(\bm{y}) = &\;
 \hat{f}^{+}_{j-1/2} + \hat{f}^{-}_{j-1/2} - \hat{f}^{-}_{j-3/2}, \\[1mm]
L_{0,j}(\bm{y}) = &\;
 \hat{f}^{+}_{j+1/2} + \hat{f}^{-}_{j+1/2} - \hat{f}^{+}_{j-1/2} - \hat{f}^{-}_{j-1/2}, \\[1mm]
L_{1,j}(\bm{y}) = &\;
 \hat{f}^{+}_{j+3/2} - \hat{f}^{+}_{j+1/2} - \hat{f}^{-}_{j+1/2}, \\[1mm]
L_{2,j}(\bm{y}) = &\; -\hat{f}^{+}_{j+3/2} .
\end{aligned}
\end{equation}
The indices of the numerical flux functions are directly related to their arguments,
e.g. $\hat{f}^{+}_{j+3/2}(y_j,y_{j+1},y_{j+2})$ due to (\ref{posFlux}).
For later use, we note that $\sum_{i=-2,\ldots,2}L_{i,j}(\bm{y})=0$.

We will now study the consistency order of the adjoint WENO3 scheme, i.e., how accurate
does the semi-discretization (\ref{adjsemiWENO3}) approximate the continuous adjoint
equation (\ref{revSol}) in the case of smooth solutions. Inserting exact solution values
$p(t,x_j)$ and $y(t,x_j)$ (still denoted by $y_j$ to simplify notation) in the semi-discrete scheme
(\ref{adjsemiWENO3comp}) gives the residual-type local spatial errors
\begin{equation}
r_j(t) = \partial_t p(t,x_j) -
\frac{1}{\Delta x}\sum_{i=-2}^{2} \partial_{y_j}L_{i,j}(\bm{y}(t))\,p(t,x_{j+i}).
\end{equation}
Taylor expansion around $x_j$ yields
\begin{equation}\label{resAdjWENO3}
r_j(t) = \partial_t p(t,x_j) - \sum_{k\ge 0} \Delta x^k
\frac{1}{(k+1)!}\sum_{i=-2}^{2} i^{k+1}\,\partial_{y_j}L_{i,j}(\bm{y}(t))
\,\partial_x^{k+1}p(t,x_j),
\end{equation}
where we have already used that the sum of the $L_{i,j}$ disappears. The method
is said to have adjoint consistency order $q$ if $r_j(t)=\Oh(\Delta x^q)$.
In what follows, we will show that the adjoint WENO3 scheme satisfies all conditions
for order $q=3$.

First, we have to calculate $\partial_{y_j}L_{i,j}$, i.e., particularly the derivatives of
the numerical flux functions defined in (\ref{posFlux}), (\ref{negFlux}). Since
$\omega_1^\pm+\omega_2^\pm=1$ for all $\bm{y}(t)$, we deduce $\partial_{y_k}\omega_1^\pm=-\partial_{y_k}\omega_2^\pm$. Introducing the notation
\begin{flalign}
\bar{f}_j^{\pm}(y_{j-1},y_j,y_{j+1}) \coloneqq \mp \frac{1}{2}f^+(y_{j-1}) \pm f^+(y_{j}) \mp \frac{1}{2}f^+(y_{j+1}),
\end{flalign}
we find
\begin{equation}\label{derivFplus}
\begin{aligned}
\partial_{y_{j-1}} \hat{f}^+_{j+1/2} =&\;
\partial_{y_{j-1}} \omega_1^+ \bar{f}_j^+ - \frac12 (f^+)'(y_{j-1})w_1^+, \\[1mm]
\partial_{y_{j}} \hat{f}^+_{j+1/2} =&\;
\partial_{y_{j}} \omega_1^+ \bar{f}_j^+ + (f^+)'(y_j)
\left( \frac32 w_1^+ + \frac12 w_2^+ \right) , \\[1mm]
\partial_{y_{j+1}} \hat{f}^+_{j+1/2} =&\;
\partial_{y_{j+1}} \omega_1^+ \bar{f}_j^+ + \frac12 (f^+)'(y_{j+1})w_2^+ ,
\end{aligned}
\end{equation}
and
\begin{equation}\label{derivFminus}
\begin{aligned}
\partial_{y_{j-1}} \hat{f}^-_{j-1/2} =&\;
\partial_{y_{j-1}} \omega_1^- \bar{f}_j^- + \frac12 (f^-)'(y_{j-1})w_1^-, \\[1mm]
\partial_{y_{j}} \hat{f}^-_{j-1/2} =&\;
\partial_{y_{j}} \omega_1^- \bar{f}_j^- + (f^-)'(y_j)
\left( \frac12 w_1^- + \frac32 w_2^- \right) , \\[1mm]
\partial_{y_{j+1}} \hat{f}^-_{j-1/2} =&\;
\partial_{y_{j+1}} \omega_1^- \bar{f}_j^- - \frac12 (f^-)'(y_{j+1})w_2^- .
\end{aligned}
\end{equation}

We have the following three lemmata.
\begin{lemma}\label{lem:01}
Suppose $f(y),\;y(t,\cdot)\in C^2(\R)$. Then
\begin{equation}
\partial_{y_k} \omega_1^\pm \bar{f}^{\pm}_j(y_{j-1},y_j,y_{j+1})=
\Oh\left(\Delta x^3\right),\quad k=j-1,j,j+1.
\end{equation}
\end{lemma}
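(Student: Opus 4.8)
The plan is to prove the estimate by factoring the product into two pieces and bounding each separately: I will show that the ``centred'' combination $\bar f_j^{\pm}$ is itself of size $\Oh(\Delta x^2)$, while the derivative of the nonlinear weight $\partial_{y_k}\omega_1^{\pm}$ is only $\Oh(\Delta x)$. Multiplying the two bounds then yields the claimed $\Oh(\Delta x^3)$. The entire argument rests on the hypotheses that $f$ and $y(t,\cdot)$ are $C^2$ and that $y$ is bounded (compact support), so that the split fluxes and all quantities composed from them, together with their first derivatives, are bounded on the relevant range.

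For the first factor I would set $g(x)\coloneqq f^{\pm}(y(t,x))$, which is $C^2$ by assumption, and observe from the definition of $\bar f_j^{\pm}$ that, up to sign, it equals the centred second difference $-\tfrac12\bigl(g(x_{j-1})-2g(x_j)+g(x_{j+1})\bigr)$. A Taylor expansion about $x_j$ cancels the constant and linear contributions and leaves $-\tfrac12\Delta x^2 g''(x_j)+\Oh(\Delta x^3)$; hence $\bar f_j^{\pm}=\Oh(\Delta x^2)$, and $C^2$ regularity is exactly what is needed for this.

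For the second factor, the key observation is that $\omega_1^{\pm}$ depends on $\bm y$ only through the smoothness indicators $\beta_1^{\pm}$ and $\beta_2^{\pm}$, and that, because $\varepsilon>0$, the map $(\beta_1,\beta_2)\mapsto\omega_1^{\pm}$ is smooth with bounded partial derivatives on the bounded range of admissible $\beta$-values. By the chain rule it therefore suffices to control $\partial_{y_k}\beta_m^{\pm}$. Each $\beta_m^{\pm}$ is a squared first difference of $g$, so $\beta_m^{\pm}=\Oh(\Delta x^2)$, and differentiating in $y_k$ produces $\partial_{y_k}\beta_m^{\pm}=2\bigl(\text{first difference of }g\bigr)\cdot\partial_{y_k}\bigl(\text{first difference of }f^{\pm}\bigr)$, in which the first-difference factor is $\Oh(\Delta x)$ and the remaining derivative factor equals $\pm(f^{\pm})'(y_k)$ or $0$ and is thus $\Oh(1)$. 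Hence $\partial_{y_k}\beta_m^{\pm}=\Oh(\Delta x)$, and the chain rule delivers $\partial_{y_k}\omega_1^{\pm}=\Oh(\Delta x)$.

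The step I expect to be most delicate is precisely this last bound on $\partial_{y_k}\omega_1^{\pm}$: one must ensure that differentiating the nonlinear weight does not destroy the extra power of $\Delta x$. This is where the regularization parameter $\varepsilon$ is essential -- since $\varepsilon+\beta_m^{\pm}\ge\varepsilon>0$, the denominators in $\omega_m^{\pm}=\tilde\omega_m^{\pm}/(\tilde\omega_1^{\pm}+\tilde\omega_2^{\pm})$ and $\tilde\omega_m^{\pm}=\gamma_m^{\pm}/(\varepsilon+\beta_m^{\pm})^2$ stay bounded away from zero as $\Delta x\to0$, so no cancellation can blow up and the single factor of $\Delta x$ coming from $\partial_{y_k}\beta_m^{\pm}$ survives intact. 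Combining $\partial_{y_k}\omega_1^{\pm}=\Oh(\Delta x)$ with $\bar f_j^{\pm}=\Oh(\Delta x^2)$ then completes the proof.
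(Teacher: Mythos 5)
Your proposal is correct and takes essentially the same approach as the paper: factor the product, show $\bar f_j^{\pm}=\Oh(\Delta x^2)$ by Taylor expansion, and show $\partial_{y_k}\omega_1^{\pm}=\Oh(\Delta x)$ by passing the derivative through the smoothness indicators ($\partial_{y_k}\beta_m^{\pm}=\Oh(\Delta x)$) with all $\varepsilon$-regularized denominators bounded away from zero, the paper merely writing out the quotient rule for $\omega_1^{\pm}=\tilde\omega_1^{\pm}/(\tilde\omega_1^{\pm}+\tilde\omega_2^{\pm})$ explicitly where you invoke boundedness of the derivatives of the map $(\beta_1,\beta_2)\mapsto\omega_1^{\pm}$. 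One minor point: your intermediate expansion $-\tfrac12\Delta x^2 g''(x_j)+\Oh(\Delta x^3)$ would require $g\in C^3$, but the bound you actually use, $\bar f_j^{\pm}=\Oh(\Delta x^2)$, already follows from $C^2$ via the Lagrange form of the remainder, so the stated hypothesis suffices exactly as you claim.
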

\begin{proof} Taylor expansion gives $\bar{f}^{\pm}_j=\Oh(\Delta x^2)$.
It remains to show that $\partial_{y_k} \omega_1^\pm=\Oh(\Delta x)$. Indeed, we
have
\begin{equation}
\partial_{y_k} \omega_1^\pm =
\frac{\tilde{\omega}_2^\pm}{(\tilde{\omega}_1^\pm + \tilde{\omega}_2^\pm)^2}
\,\partial_{y_k} \tilde{\omega}_1^\pm
-
\frac{\tilde{\omega}_1^\pm}{(\tilde{\omega}_1^\pm + \tilde{\omega}_2^\pm)^2}
\,\partial_{y_k} \tilde{\omega}_2^\pm\,.
\end{equation}
The two quotients are bounded by $(\tilde{\omega}_i^\pm)^{-1}=\Oh(\varepsilon^2)$,
$i=2,1$, respectively, for $\Delta x\rightarrow 0$. Taylor expansions of the derivatives
$\partial_{y_k} \tilde{\omega}_i^\pm=-2\gamma_i^\pm(\varepsilon+\beta_i^\pm)^{-3}
\partial_{y_k}\beta_i^\pm$, $i=1,2$, show $\Oh(\Delta x)$ for these terms and
therefore also for $\partial_{y_k} \omega_1^\pm$.
\end{proof}
\begin{lemma}\label{lem:02}
Let $\{x_{j-1},x_{j},x_{j+1}\}$ and $\{x_{j},x_{j+1},x_{j+2}\}$ be two neighboring
stencils and $w_{i,j}^\pm$, $w_{i,j+1}^\pm$, $i=1,2,$ the corresponding weights.
Suppose $f(y),\;y(t,\cdot)\in C^3(\R)$. Then
\begin{equation}
w_{i,j+1}^\pm - w_{i,j}^\pm = \Oh\left(\Delta x^4\right),\quad i=1,2.
\end{equation}
\end{lemma}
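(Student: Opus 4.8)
The plan is to expand both nonlinear weights about the linear weight $\gamma_i^\pm$ and to show that each admits the form $w_{i,\cdot}^\pm=\gamma_i^\pm+c_i^\pm(x_\cdot)\,\Delta x^3+\Oh(\Delta x^4)$ with a coefficient $c_i^\pm$ that is a \emph{smooth} function of the underlying grid point; differencing two neighbouring evaluations of such a function then yields the extra power of $\Delta x$ that produces the claim. Throughout I abbreviate $\varphi^\pm(x):=f^\pm(y(t,x))$, which is a $C^3$-function of $x$ because $f^\pm$ and $y(t,\cdot)$ are, and I evaluate all its derivatives at $x_j$.

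First I would Taylor-expand the smoothness indicators. Using $\varphi^\pm_j-\varphi^\pm_{j-1}=(\varphi^\pm)'\Delta x-\tfrac12(\varphi^\pm)''\Delta x^2+\Oh(\Delta x^3)$ and the analogous forward difference, squaring gives
\begin{equation}
\beta_1^\pm=((\varphi^\pm)')^2\Delta x^2-(\varphi^\pm)'(\varphi^\pm)''\Delta x^3+\Oh(\Delta x^4),\quad
\beta_2^\pm=((\varphi^\pm)')^2\Delta x^2+(\varphi^\pm)'(\varphi^\pm)''\Delta x^3+\Oh(\Delta x^4).
\end{equation}
The decisive observation is that the $\Oh(\Delta x^2)$ leading terms of $\beta_1^\pm$ and $\beta_2^\pm$ coincide, so that after cancellation
\begin{equation}
\beta_2^\pm-\beta_1^\pm=2(\varphi^\pm)'(\varphi^\pm)''\,\Delta x^3+\Oh(\Delta x^4)=\Oh(\Delta x^3).
\end{equation}

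Next I would insert $\tilde{\omega}_m^\pm=\gamma_m^\pm(\varepsilon+\beta_m^\pm)^{-2}$ into $w_1^\pm=\tilde{\omega}_1^\pm/(\tilde{\omega}_1^\pm+\tilde{\omega}_2^\pm)$ and, since $\varepsilon$ is fixed while $\beta_m^\pm=\Oh(\Delta x^2)\to 0$, expand the resulting rational function in the small quantities $\beta_m^\pm/\varepsilon$. A first-order expansion, using $\gamma_1^\pm+\gamma_2^\pm=1$, collapses to
\begin{equation}
w_1^\pm-\gamma_1^\pm=\frac{2\gamma_1^\pm\gamma_2^\pm}{\varepsilon}\,(\beta_2^\pm-\beta_1^\pm)+\Oh(\Delta x^4),
\end{equation}
and substituting the indicator difference from the previous step turns this into $w_1^\pm-\gamma_1^\pm=c_1^\pm(x_j)\,\Delta x^3+\Oh(\Delta x^4)$ with $c_1^\pm:=4\varepsilon^{-1}\gamma_1^\pm\gamma_2^\pm\,(\varphi^\pm)'(\varphi^\pm)''$, which is a $C^1$-function of $x$ under the $C^3$-hypothesis. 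The identical representation holds with $x_j$ replaced by $x_{j+1}$ on the neighbouring stencil.

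Finally I would difference the two representations. Because $c_1^\pm$ is Lipschitz, $c_1^\pm(x_{j+1})-c_1^\pm(x_j)=\Oh(\Delta x)$, whence
\begin{equation}
w_{1,j+1}^\pm-w_{1,j}^\pm=\bigl(c_1^\pm(x_{j+1})-c_1^\pm(x_j)\bigr)\Delta x^3+\Oh(\Delta x^4)=\Oh(\Delta x^4),
\end{equation}
and the case $i=2$ follows immediately from $w_2^\pm=1-w_1^\pm$. I expect the main obstacle to be precisely the cancellation in the first step: one must verify that the $\Oh(\Delta x^2)$ parts of $\beta_1^\pm$ and $\beta_2^\pm$ are genuinely identical, so that $\beta_2^\pm-\beta_1^\pm$ is only $\Oh(\Delta x^3)$ and hence $w_i^\pm-\gamma_i^\pm$ is $\Oh(\Delta x^3)$ rather than the naive $\Oh(\Delta x^2)$. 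Without this gain the difference of neighbouring weights would be only $\Oh(\Delta x^3)$, one power short of the assertion; the final step then merely converts the smoothness of the leading coefficient into the missing power of $\Delta x$.
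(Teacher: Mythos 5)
Your proof is correct, but it is organized differently from the one in the paper, and the difference is worth recording. The paper differences first and expands second: writing $h(x):=f^{+}(y(x))$, it expresses $w_{1,j+1}^{+}-w_{1,j}^{+}$ as a single rational expression whose numerator $D_{\tilde{\omega}}$ is a difference of products of four squared indicator factors, and then Taylor-expands that whole expression about the single point $x_j$, verifying that all contributions through order $\Delta x^3$ cancel and that the numerator's leading term is $\tfrac{4\gamma_2^{+}}{\varepsilon\gamma_1^{+}}\Delta x^4\bigl((h_j'')^2+h_j'h_j'''\bigr)$. You instead expand first and difference second: each weight is written as $w_{1,\cdot}^{\pm}=\gamma_1^{\pm}+c_1^{\pm}(x_\cdot)\Delta x^3+\Oh(\Delta x^4)$ with the \emph{same} smooth coefficient $c_1^{\pm}=4\varepsilon^{-1}\gamma_1^{\pm}\gamma_2^{\pm}(\varphi^{\pm})'(\varphi^{\pm})''$, so the constant terms cancel exactly and the Lipschitz continuity of $c_1^{\pm}$ supplies the missing power of $\Delta x$; the case $i=2$ follows from $w_2^{\pm}=1-w_1^{\pm}$. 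Your intermediate representation is in fact a sharpened form of Lemma~\ref{lem:03} (which asserts only $\omega_1^{\pm}-\gamma_1^{\pm}=\Oh(\Delta x^3)$), so your argument delivers Lemma~\ref{lem:03} and Lemma~\ref{lem:02} in one pass, whereas the paper establishes them by two independent computations. What each route buys: the paper's keeps all the $\varepsilon$-bookkeeping in one expression and exhibits the explicit leading term of the difference directly; yours avoids the somewhat delicate four-factor cancellation inside $D_{\tilde{\omega}}$ and explains structurally where the fourth power comes from, namely from differencing the $\Delta x^3$-coefficient — and indeed $\tfrac{d}{dx}\bigl((\varphi^{\pm})'(\varphi^{\pm})''\bigr)=((\varphi^{\pm})'')^2+(\varphi^{\pm})'(\varphi^{\pm})'''$ recovers exactly the paper's leading coefficient, a useful consistency check. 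If you write this up, make two points explicit: the $\Oh(\Delta x^4)$ remainders in the two stencil expansions must be uniform in the grid point (they are, being controlled by $\max|\varphi'''|$ on a compact set), and it is precisely the $C^3$ hypothesis that makes $(\varphi^{\pm})'(\varphi^{\pm})''$ a $C^1$, hence locally Lipschitz, function — so your last step uses the full stated regularity, just as the paper's expansion of $D_{\tilde{\omega}}$ does.
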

\begin{proof}
We consider the weights $w_{1,j}^+$ and $w_{1,j+1}^+$. Analogous calculations can
be done for the other cases. We set $h(x):=f^+(y(x))$ and define $h_j:=f^+(y(x_j))$.
Then
\begin{equation}
\begin{aligned}
w_{1,j+1}^+ - w_{1,j}^+ = & \;
\frac{\tilde{\omega}_{1,j+1}^+}{\tilde{\omega}_{1,j+1}^+ + \tilde{\omega}_{2,j+1}^+}
- \frac{\tilde{\omega}_{1,j}^+}{\tilde{\omega}_{1,j}^+ + \tilde{\omega}_{2,j}^+}\\
= & \;
\frac{
\frac{\tilde{\omega}_{2,j}^+}{\tilde{\omega}_{1,j}^+}-
\frac{\tilde{\omega}_{2,j+1}^+}{\tilde{\omega}_{1,j+1}^+}
}
{
\left( 1+ \frac{\tilde{\omega}_{2,j}^+}{\tilde{\omega}_{1,j}^+}\right)\,
\left( 1+ \frac{\tilde{\omega}_{2,j+1}^+}{\tilde{\omega}_{1,j+1}^+}\right)
}\,.
\end{aligned}
\end{equation}
Due to the strict positivity of the weights, it remains to study the asymptotic
behaviour of the nominator. Using the definitions (\ref{weights}) and (\ref{smoothind}),
we have
\begin{equation}\label{diffw}
D_{\tilde{w}}:=\frac{\tilde{\omega}_{2,j}^+}{\tilde{\omega}_{1,j}^+}-
\frac{\tilde{\omega}_{2,j+1}^+}{\tilde{\omega}_{1,j+1}^+} =
\frac{\gamma_2^+}{\gamma_1^+}\,
\frac{(\varepsilon + \beta_{1,j}^+)^2(\varepsilon + \beta_{2,j+1}^+)^2
- (\varepsilon + \beta_{1,j+1}^+)^2(\varepsilon + \beta_{2,j}^+)^2}
{(\varepsilon + \beta_{2,j}^+)^2(\varepsilon + \beta_{2,j+1}^+)^2}
\end{equation}
with the smoothness indicators
\begin{equation}
\begin{aligned}
\beta_{1,j}^+ = &\; (h_{j}-h_{j-1})^2,\quad \beta_{2,j}^+ = (h_{j+1}-h_{j})^2\,,\\
\beta_{1,j+1}^+ = &\; (h_{j+1}-h_{j})^2,\quad \beta_{2,j+1}^+ = (h_{j+2}-h_{j+1})^2\,.
\end{aligned}
\end{equation}
Taylor expansion at $x_j$ yields in (\ref{diffw})
\begin{equation}
D_{\tilde{w}} = \frac{\gamma_2^+}{\varepsilon^4\gamma_1^+}
\left(
4\varepsilon^3 \Delta x^4 \left( (h''_j)^2+h'_jh'''_j\right) +
\Oh(\varepsilon^3 \Delta x^5) \right)\,,
\end{equation}
which shows the assertion.
\end{proof}
\begin{lemma}\label{lem:03}
Suppose $f(y),\;y(t,\cdot)\in C^2(\R)$. Then
\begin{equation}
\omega_1^\pm - \gamma_1^\pm = \Oh\left(\Delta x^3\right)\,.
\end{equation}
\end{lemma}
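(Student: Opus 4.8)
The plan is to compare $\omega_1^\pm$ directly with $\gamma_1^\pm$ by rewriting the normalized weight in terms of the single ratio $\tilde{\omega}_2^\pm/\tilde{\omega}_1^\pm$ and tracking its deviation from the ratio of the linear weights. Since
$$
\omega_1^\pm=\frac{\tilde{\omega}_1^\pm}{\tilde{\omega}_1^\pm+\tilde{\omega}_2^\pm}
=\left(1+\frac{\tilde{\omega}_2^\pm}{\tilde{\omega}_1^\pm}\right)^{-1},
$$
everything reduces to the asymptotic behaviour of this one ratio as $\Delta x\to 0$ with $\varepsilon$ held fixed. Inserting the definitions (\ref{weights}) and (\ref{smoothind}) gives
$$
\frac{\tilde{\omega}_2^\pm}{\tilde{\omega}_1^\pm}
=\frac{\gamma_2^\pm}{\gamma_1^\pm}\,
\frac{(\varepsilon+\beta_1^\pm)^2}{(\varepsilon+\beta_2^\pm)^2},
$$
so the task is to show that the fraction of smoothness indicators equals $1+\Oh(\Delta x^3)$.

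Setting $h:=f^\pm\circ y$, which is $C^2$ by hypothesis (note that the present lemma, unlike the sharper Lemma~\ref{lem:02}, does not require $C^3$), I would Taylor-expand $\beta_1^\pm=(h_j-h_{j-1})^2$ and $\beta_2^\pm=(h_{j+1}-h_j)^2$ about $x_j$. The key step is that the leading $\Oh(\Delta x^2)$ terms of both indicators equal $\Delta x^2 (h'_j)^2$ and therefore cancel in the difference, leaving
$$
\beta_1^\pm-\beta_2^\pm=-2\,\Delta x^3\, h'_j h''_j+\Oh(\Delta x^4)=\Oh(\Delta x^3),
$$
while each $\varepsilon+\beta_i^\pm=\varepsilon+\Oh(\Delta x^2)$ stays bounded away from zero. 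This cancellation is the central point of the argument: it is precisely what forces the nonlinear weights to approach the ideal linear weights at third order rather than at first or second.

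With this in hand, writing $(\varepsilon+\beta_1^\pm)^2/(\varepsilon+\beta_2^\pm)^2=\bigl(1+(\beta_1^\pm-\beta_2^\pm)/(\varepsilon+\beta_2^\pm)\bigr)^2$ and expanding yields $1+\Oh(\Delta x^3)$, hence $\tilde{\omega}_2^\pm/\tilde{\omega}_1^\pm=(\gamma_2^\pm/\gamma_1^\pm)(1+\Oh(\Delta x^3))$. Substituting back gives
$$
\omega_1^\pm=\frac{\gamma_1^\pm}{\gamma_1^\pm+\gamma_2^\pm\bigl(1+\Oh(\Delta x^3)\bigr)},
$$
and invoking the normalization $\gamma_1^\pm+\gamma_2^\pm=1$ produces $\omega_1^\pm=\gamma_1^\pm/(1+\Oh(\Delta x^3))=\gamma_1^\pm+\Oh(\Delta x^3)$, which is the claim. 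The only subtlety to keep in mind is that the implied constants in the $\Oh$-terms involve negative powers of $\varepsilon$, which is harmless since $\varepsilon$ is a fixed positive parameter while $\Delta x\to 0$.
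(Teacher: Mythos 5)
Your proof is correct, and its analytic core coincides with the paper's: everything hinges on the cancellation of the leading $\Oh(\Delta x^2)$ parts of the two smoothness indicators, so that $\beta_2^\pm-\beta_1^\pm = 2\Delta x^3\,h'_jh''_j + o(\Delta x^3)$, combined with the fact that each $\varepsilon+\beta_i^\pm$ stays bounded away from zero. What differs is only the algebraic packaging. The paper writes the difference directly as a quotient, $\omega_1^+-\gamma_1^+ = N_{\tilde{\omega}}/(\tilde{\omega}_1^++\tilde{\omega}_2^+)$ with
\begin{equation*}
N_{\tilde{\omega}}=\tilde{\omega}_1^+-\gamma_1^+(\tilde{\omega}_1^++\tilde{\omega}_2^+)
=\gamma_1^+\gamma_2^+\,\frac{(\varepsilon+\beta_2^+)^2-(\varepsilon+\beta_1^+)^2}{(\varepsilon+\beta_1^+)^2(\varepsilon+\beta_2^+)^2}\,,
\end{equation*}
and estimates numerator and denominator separately, whereas you normalize by $\tilde{\omega}_1^\pm$ and track the single ratio $\tilde{\omega}_2^\pm/\tilde{\omega}_1^\pm$ --- which is, incidentally, exactly the device the paper itself employs in the proof of Lemma~\ref{lem:02}. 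Both routes produce the same $\varepsilon$-dependence ($\Oh(\Delta x^3/\varepsilon)$-type constants), and neither is more general than the other. One small technical remark: under the stated $C^2$ hypothesis, the remainder in your expansion of $\beta_1^\pm-\beta_2^\pm$ is only $o(\Delta x^3)$ (Peano form), not $\Oh(\Delta x^4)$ as written; this costs nothing for the $\Oh(\Delta x^3)$ conclusion, and the paper's own $\Oh(\Delta x^5)$ remainder carries the same harmless imprecision.
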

\begin{proof}
We first consider $\omega_1^+ - \gamma_1^+$. The difference can be expressed by
\begin{equation}
\omega_1^+ - \gamma_1^+ =
\frac{\tilde{\omega}_1^+-\gamma_1^+ (\tilde{\omega}_1^+ + \tilde{\omega}_2^+)}
{\tilde{\omega}_1^+ + \tilde{\omega}_2^+}.
\end{equation}
The denominator is bounded from below by
$(\gamma_1^+ + \gamma_2^+)\varepsilon^{-2}=\varepsilon^{-2}>0$. Further,
we deduce for the nominator
\begin{equation}
N_{\tilde{\omega}}:=\tilde{\omega}_1^+-\gamma_1^+ (\tilde{\omega}_1^+ + \tilde{\omega}_2^+) =
\gamma_1^+\gamma_2^+\;\frac{(\varepsilon+\beta_2^+)^2-(\varepsilon+\beta_1^+)^2}
{(\varepsilon+\beta_1^+)^2\,(\varepsilon+\beta_2^+)^2}\,.
\end{equation}
Let $h(x):=f^+(y(x))$ and define $h_j:=f^+(y(x_j))$. Inserting the smoothness
indicators $\beta_1^+=(h_j-h_{j-1})^2$ and $\beta_2^+=(h_{j+1}-h_{j})^2$,
Taylor expansion at $x_j$ yields
\begin{equation}
N_{\tilde{\omega}} = \frac{\gamma_1^+\gamma_2^+}{\varepsilon^4}
\left( 4\varepsilon\Delta x^3 h'_jh''_j + \Oh\left(\Delta x^5\right) \right)\,.
\end{equation}
Putting this together with the bound for the denominator stated above gives
$\omega_1^+ - \gamma_1^+ = \Oh(\Delta x^3/\varepsilon)$, from which we can
conclude the proof. The same arguments apply to the second
difference $\omega_1^- - \gamma_1^-$.
\end{proof}
We are now ready to state the main result of this section.
\begin{theorem}\label{theo:01}
Let $f(y),\;y(t,\cdot)\in C^3(\R)$ and $p(t,\cdot)\in C^4(\R)$.
Then the adjoint WENO3 scheme (\ref{adjsemiWENO3}) is adjoint consistent
of order three, i.e., $r_j(t)=\Oh(\Delta x^3)$ in (\ref{resAdjWENO3}).
\end{theorem}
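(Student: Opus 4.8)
The plan is to read off the three scalar \emph{moment conditions} hidden in the expansion (\ref{resAdjWENO3}) and to verify each of them separately. Setting $S_m := \sum_{i=-2}^{2} i^{m}\,\partial_{y_j}L_{i,j}(\bm{y}(t))$ and Taylor-expanding $p(t,x_{j+i})$ to third order with a fourth-order remainder (legitimate since $p(t,\cdot)\in C^4$), the identity $\sum_i L_{i,j}\equiv 0$ kills the $\Delta x^{-1}$ term exactly (so $S_0=0$), the remainder contributes $\Oh(\Delta x^3)$ after the $1/\Delta x$ scaling because each $\partial_{y_j}L_{i,j}=\Oh(1)$, and inserting $\partial_t p=-f'(y)\partial_x p$ from (\ref{revSol}) leaves
\begin{equation}
r_j(t)= -\bigl(f'(y_j)+S_1\bigr)\,\partial_x p -\tfrac{\Delta x}{2}\,S_2\,\partial_x^2 p -\tfrac{\Delta x^2}{6}\,S_3\,\partial_x^3 p +\Oh(\Delta x^3).
\end{equation}
Hence it suffices to prove the three conditions $S_1=-f'(y_j)+\Oh(\Delta x^3)$, $S_2=\Oh(\Delta x^2)$ and $S_3=\Oh(\Delta x)$.

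First I would render the nonlinear weights harmless. Substituting the derivative formulas (\ref{derivFplus}), (\ref{derivFminus}) and their shifts to the half-points $x_{j\pm3/2}$ into the $\partial_{y_j}L_{i,j}$, Lemma~\ref{lem:01} shows that every term carrying a weight derivative $\partial_{y_k}\omega_1^\pm\,\bar{f}_j^\pm$ is $\Oh(\Delta x^3)$ and may be discarded for all three conditions. Lemma~\ref{lem:03} then lets me replace each remaining nonlinear weight $\omega_m^\pm$ by its optimal value $\gamma_m^\pm$ at the cost of $\Oh(\Delta x^3)$, while Lemma~\ref{lem:02} guarantees that the weights attached to the four neighbouring fluxes $\hat{f}^\pm_{j\pm1/2},\hat{f}^\pm_{j\pm3/2}$ combined inside $L_{i,j}$ differ only by $\Oh(\Delta x^4)$, so that this replacement is consistent across the whole stencil. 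After the reduction each $\partial_{y_j}L_{i,j}$ becomes a fixed linear combination of $(f^+)'(y_{j+\ell})$ and $(f^-)'(y_{j+\ell})$ with the optimal weights, up to an $\Oh(\Delta x^3)$ error that is negligible for every condition.

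With the scheme linearised I would compute the three moments by Taylor-expanding $(f^\pm)'(y_{j+\ell})$ about $x_j$. At leading order a direct evaluation of the coefficients (for instance $\partial_{y_j}L_{2,j}=\tfrac16(f^+)'(y_j)$ and $\partial_{y_j}L_{-2,j}=-\tfrac16(f^-)'(y_j)$, and so on) gives $S_1=-\bigl((f^+)'+(f^-)'\bigr)(y_j)=-f'(y_j)$, where the flux splitting $f=f^++f^-$ recombines the two halves, while the $\Oh(1)$ parts of $S_2$ and $S_3$ cancel by the symmetry of the optimal weights. Since the leading part of $S_3$ already vanishes, $S_3=\Oh(\Delta x)$ is immediate. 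For $S_2$ I would check that the first-order correction cancels as well, yielding $S_2=\Oh(\Delta x^2)$, and for $S_1$ I would carry the expansion two further orders and verify that both the first- and second-order corrections vanish, so that $S_1=-f'(y_j)+\Oh(\Delta x^3)$.

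The hard part will be this last batch of cancellations in $S_1$. Because $y_j$ appears as the left, the middle, and the right argument of fluxes sitting at the four distinct half-points $x_{j\pm1/2},x_{j\pm3/2}$, each flux derivative produces $(f^\pm)'$ evaluated at a \emph{different} node $x_{j+\ell}$, and one must propagate all of these shifts $\ell$ through the $i$- and $i^m$-weighted sums before the higher-order Taylor terms combine to zero. The bookkeeping stays manageable only if the positive and negative contributions are kept apart until the very last step, where $(f^+)'+(f^-)'=f'$ fuses them; mixing them earlier obscures exactly the cancellation that delivers the sharp $\Oh(\Delta x^3)$ in $S_1$. Collecting the three verified moment conditions together with the $\Oh(\Delta x^3)$ remainder then gives $r_j(t)=\Oh(\Delta x^3)$, which is the assertion.
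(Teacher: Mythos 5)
Your skeleton is the paper's proof. The three moment conditions $S_1=-f'(y_j)+\Oh(\Delta x^3)$, $S_2=\Oh(\Delta x^2)$, $S_3=\Oh(\Delta x)$ are exactly the quantities the paper calls $d_0,d_1,d_2$ (with $d_k=S_{k+1}$, and the paper proves the stronger $\Oh(\Delta x^3)$ bounds for all three), and the opening reduction — use Lemma \ref{lem:01} to discard every term $\partial_{y_j}\omega_1^\pm\,\bar f^\pm_m$ — is identical. Your one genuine variation is to invoke Lemma \ref{lem:03} uniformly, replacing every remaining nonlinear weight by its optimal value at cost $\Oh(\Delta x^3)$. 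The paper instead keeps the nonlinear weights in $d_0$ and $d_1$, exploiting the exact normalization $w_1^\pm+w_2^\pm=1$ together with Lemma \ref{lem:02}, and uses Lemma \ref{lem:03} only for $d_2$. Your substitution is legitimate (Lemma \ref{lem:03} is translation invariant, and $\omega_2^\pm-\gamma_2^\pm=\Oh(\Delta x^3)$ follows from the normalization), and it buys uniformity: all three moments are then handled the same way. Note, though, that it renders Lemma \ref{lem:02} superfluous in your version — your appeal to it in the second paragraph does no actual work.

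There is, however, a structural misconception in how you describe the remaining computation. You claim that because $y_j$ occupies different argument slots of the fluxes at $x_{j\pm1/2}$, $x_{j\pm3/2}$, the derivatives $\partial_{y_j}L_{i,j}$ contain factors $(f^\pm)'$ evaluated at \emph{different} nodes $y_{j+\ell}$, which must be Taylor-expanded about $x_j$, after which first- and second-order corrections must be shown to cancel — this being "the hard part". That is not so: the differentiation is with respect to $y_j$, so the chain rule pins every flux-derivative factor to $(f^\pm)'(y_j)$, regardless of which slot $y_j$ occupies; the slot only determines the coefficient ($\mp\tfrac12 w_1^\pm$, $\pm\tfrac12 w_2^\pm$, or the middle combination), as (\ref{derivFplus}) and (\ref{derivFminus}) show. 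Your own leading-order example $\partial_{y_j}L_{2,j}=\tfrac16(f^+)'(y_j)$ already exhibits this. The stencil shifts enter only through which weights $w^\pm_{i,m}$, $m=j-1,j,j+1$, appear — and those you have already disposed of via Lemma \ref{lem:03}. Consequently each $S_m$ is an exact constant-coefficient combination of $(f^+)'(y_j)$ and $(f^-)'(y_j)$ plus $\Oh(\Delta x^3)$; once the constants are evaluated (they sum to $-\bigl((f^+)'+(f^-)'\bigr)(y_j)=-f'(y_j)$ for $S_1$ and to zero for $S_2$ and $S_3$, which is precisely the bookkeeping the paper records for $d_0,d_1,d_2$), the proof closes, and you even obtain $S_2,S_3=\Oh(\Delta x^3)$ rather than the weaker bounds you targeted. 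So the anticipated cancellation analysis does not exist; executed correctly, your plan goes through and is simpler than you expect.
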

\begin{proof}
Let us define
$d_k:=\sum_{i=-2,\ldots,2}i^{k+1}\partial_{y_j}L_{i,j}(\bm{y}(t))$, $k=0,1,2,$
and denote by $w_{i,m}^\pm$ the weights that correspond to the stencils
$\{x_{m-1},x_{m},x_{m+1}\}$, $m=j-1,j,j+1$. From (\ref{coeffLij}), we calculate
\begin{equation}
\begin{aligned}
d_0 &\; = -\partial_{y_j} \left( 2L_{-2,j}(\bm{y}(t))+
L_{-1,j}(\bm{y}(t))-L_{1,j}(\bm{y}(t))-
2L_{2,j}(\bm{y}(t))\right) \\[1mm]
    &\; = -\partial_{y_j}\left(
\hat{f}^-_{j-3/2} + \hat{f}^+_{j-1/2} + \hat{f}^-_{j-1/2} +
\hat{f}^+_{j+1/2} + \hat{f}^-_{j+1/2} + \hat{f}^+_{j+3/2}\right),
\end{aligned}
\end{equation}
which gives by using (\ref{derivFplus}), (\ref{derivFminus}) for different stencils
and Lemma \ref{lem:01} for all terms $\partial_{y_j} \omega_1^\pm \bar{f}^{\pm}_m$
with $m=j-1,j,j+1$,
\begin{equation}
\begin{aligned}
d_0  = &\; -\left(
\frac12w_{1,j}^{-}+\frac12w_{1,j+1}^{-}-\frac12w_{2,j-1}^{-}+\frac32w_{2,j}^{-}
\right)(f^-)'(y_j) \\[2mm]
&\; -\left(
\frac32w_{1,j}^{+}-\frac12w_{1,j+1}^{+}+\frac12w_{2,j-1}^{+}+\frac12w_{2,j}^{+}
\right)(f^+)'(y_j) + \Oh(\Delta x^3).
\end{aligned}
\end{equation}
Eventually, Lemma \ref{lem:02} and the property $w_{1,j}^\pm+w_{2,j}^\pm=1$ yields
\begin{equation}
d_0 = - \left( (f^-)'(y_j) + (f^+)'(y_j) \right) + \Oh(\Delta x^3)
= -f'(y_j) + \Oh(\Delta x^3).
\end{equation}
Analogously, we derive
\begin{equation}
\begin{aligned}
d_1  = &\; \left(
\frac12w_{1,j}^{-}-\frac12w_{1,j+1}^{-}-\frac32w_{2,j-1}^{-}+\frac32w_{2,j}^{-}
\right)(f^-)'(y_j) \\[2mm]
&\; + \left(
-\frac32w_{1,j}^{+}+\frac32w_{1,j+1}^{+}+\frac12w_{2,j-1}^{+}-\frac12w_{2,j}^{+}
\right)(f^+)'(y_j) + \Oh(\Delta x^3)
\end{aligned}
\end{equation}
and
\begin{equation}
\begin{aligned}
d_2  = &\; \left(
-\frac12w_{1,j}^{-}-\frac12w_{1,j+1}^{-}+\frac72w_{2,j-1}^{-}-\frac32w_{2,j}^{-}
\right)(f^-)'(y_j) \\[2mm]
&\; + \left(
-\frac32w_{1,j}^{+}+\frac72w_{1,j+1}^{+}-\frac12w_{2,j-1}^{+}-\frac12w_{2,j}^{+}
\right)(f^+)'(y_j) + \Oh(\Delta x^3)\,.
\end{aligned}
\end{equation}
Lemma \ref{lem:02} directly shows that $d_1=\Oh(\Delta x^3)$. Using $w_1^\pm+w_2^\pm=1$ and
again Lemma \ref{lem:02}, the linear combinations of the weights in $d_2$ can be simplified to
$2-3w_{1,j}^-$ and $3w_{1,j}^+-1$ up to order $\Oh(\Delta x^4)$, respectively.
Applying now Lemma \ref{lem:03} with $\gamma_1^+=1/3$ and $\gamma_1^-=2/3$ to these expressions
gives $d_2=\Oh(\Delta x^3)$.

In a last step, we use the asymptotic expressions for $d_i$, $i=0,1,2,$
to calculate the residual-type local spatial error
\begin{equation}
\begin{aligned}
r_j(t) = &\; \partial_tp(t,x_j)-\sum_{k=0}^2\Delta x^k\frac{1}{(k+1)!}
d_k\,\partial_{x}^{k+1}p(t,x_j) + \Oh\left( \Delta x^3\right)\\[1mm]
= &\; \partial_tp(t,x_j) + f'(y(t,x_j))\,\partial_xp(t,x_j) + \Oh\left( \Delta x^3\right)
= \Oh\left( \Delta x^3\right)\,.
\end{aligned}
\end{equation}
This concludes the proof.
\end{proof}

\section{Numerical Experiments}
In this section, we will present some numerical examples for Burgers equation, i.e.,
we study problems with the nonlinear flux function $f(y)=\frac12 y^2$ in (\ref{hypEqIntro}).
The first example with smooth initial data and solution is chosen in order to check
to third-order convergence of the discrete adjoint WENO3 method as stated in
Theorem \ref{theo:01}. In the second example, the approximation property of the discrete
adjoint in the case of a shock in the initial solution is investigated and compared to
approximations computed by means of the first-order modified Lax-Friedrichs (LF) and
Engquist-Osher (EO) schemes. These schemes read
\begin{equation}
\begin{aligned}
y_j^0 = &\; u_0(x_j)\,,\\[1mm]
y_j^{n+1} = &\; y_j^n - \frac{\Delta x}{\Delta t}
\left( \hat{f}(y_j^n,y_{j+1}^n) - \hat{f}(y_{j-1}^n,y_{j}^n)\right),\quad n=0,\ldots,n_T-1,
\end{aligned}
\end{equation}
with $y_j^n\approx y(n\Delta t,x_j)$, $n_T\,\Delta t=T$, and
numerical fluxes given by
\begin{equation}
\begin{aligned}
\hat{f}_{LF}(a,b) = &\; \frac12 \left( f(b)+f(a)\right) -
\frac{\gamma}{2}\frac{\Delta x}{\Delta t} \left( b-a \right),
\quad \gamma\in (0,1)\,,\\[2mm]
\hat{f}_{EO}(a,b) = &\; f(0) + \int_{0}^{a}\max(0,f'(s))\,ds +
\int_{0}^{b}\min(0,f'(s))\,ds\,.
\end{aligned}
\end{equation}
Applying a standard Lagrangian approach and discrete adjoint calculus,
the discrete adjoint schemes can be derived from
\cite[Prep. 3.1]{HajianHintermuellerUlbrich2017} as
\begin{equation}
\begin{aligned}
p_j^{n_T} = &\; \partial_yG(y_j^{n_T},y_d(x_j))\,,\\[1mm]
p_j^n = &\; c_{j-1}p_{j-1}^{n+1} + c_{j}p_{j}^{n+1} + c_{j+1}p_{j+1}^{n+1},\quad
n=n_T-1,\ldots,0,
\end{aligned}
\end{equation}
with the coefficients
\begin{equation}
c_{j-1}=\frac{\gamma}{2} -
\frac{\Delta t}{2\Delta x}f'(y_j^{n+1}),\quad
c_j=1-\gamma,\quad
c_{j+1}=\frac{\gamma}{2} +
\frac{\Delta t}{2\Delta x}f'(y_j^{n+1}),\\
\end{equation}
for the LF scheme and
\begin{equation}
\begin{aligned}
c_{j-1} = &\; \frac{\Delta t}{2\Delta x}
\left( |f'(y_j^{n+1})| - f'(y_j^{n+1}) \right),\quad
c_j=1-\frac{\Delta t}{\Delta x} |f'(y_j^{n+1})|,\\[2mm]
c_{j+1} = &\; \frac{\Delta t}{2\Delta x}
\left( |f'(y_j^{n+1})| + f'(y_j^{n+1}) \right),
\end{aligned}
\end{equation}
for the EO scheme. Convergence of these schemes has been
intensively studied in \cite{AguilarSchmittUlbrichMoos2019,Giles2003,GilesUlbrich2010a,GilesUlbrich2010b,Ulbrich2001}.
The choice $\gamma\!=\!1$ leads to the classical LF method. Stability requirements for
the adjoint LF and EO schemes yield the optimal value $\gamma^\star\!=\!0.5$ together
with the CFL-condition $\Delta t \le \gamma^\star\Delta x/\sup |f'(y)|$, see e.g. \cite{HajianHintermuellerUlbrich2017}. Then, both schemes converge for Lipschitz continuous
end data $p^T(x)$ in (\ref{revSol}). The stronger condition
$\Delta t \le \gamma^\star (\Delta x)^{2-q}/\sup |f'(y)|$, $0<q<1$, ensures the convergence
of the modified LF scheme for discontinuous end data, too \cite{GilesUlbrich2010a,GilesUlbrich2010b}. Convergence for slightly modified end data and less
numerical viscosity has been recently studied in \cite{AguilarSchmittUlbrichMoos2019}.

In order to get a fully discrete scheme for WENO3, the differential equation (\ref{semiWENO3}) is
numerically solved by the three-stage third-order strong-stability-preserving
Runge-Kutta method SSPRK3, which offers good stability properties \cite{GottliebKetchesonShu2011,GottliebShuTadmor2001,HajianHintermuellerUlbrich2017,
HintermuellerStrogis2018}. In the Shu-Osher representation, it reads
\begin{equation}
\begin{aligned}
\bm{y}_0^n = &\; \bm{y}^n\,,\\[1mm]
\bm{y}_1^n = &\; \bm{y}_0^n - \Delta t F_{\Delta x}(\bm{y}_0^n)\,,\\[1mm]
\bm{y}_2^n = &\; \frac34 \bm{y}_0^n + \frac14 \bm{y}_1^n -
\frac14 \Delta t F_{\Delta x}(\bm{y}_1^n)\,,\\[1mm]
\bm{y}^{n+1} = &\; \frac14 \bm{y}_0^n + \frac23 \bm{y}_2^n -
\frac23 \Delta t F_{\Delta x}(\bm{y}_2^n)\,,\quad n=0,\ldots,n_T-1.
\end{aligned}
\end{equation}
The corresponding adjoint time discretization has the form
(see e.g. \cite{HajianHintermuellerUlbrich2017})
\begin{equation}
\begin{aligned}
\bm{p}_0^n = &\; \bm{p}^{n+1}\,,\\[1mm]
\bm{p}_1^n = &\; \frac23 \bm{p}_0^n
- \frac23 \Delta t \,\nabla_{\bm{y}} F_{\Delta x}(\bm{y}_2^n)^T\,\bm{p}_0^n\,,\\[1mm]
\bm{p}_2^n = &\; \frac14 \bm{p}_1^n
- \frac14 \Delta t \,\nabla_{\bm{y}} F_{\Delta x}(\bm{y}_1^n)^T\,\bm{p}_1^n\,,\\[1mm]
\bm{p}^n = &\; \frac13 \bm{p}_0^n + \frac34 \bm{p}_1^n + \bm{p}_2^n
- \Delta t \,\nabla_{\bm{y}} F_{\Delta x}(\bm{y}_0^n)^T\,\bm{p}_2^n\,,\quad n=n_T-1,\ldots,0.
\end{aligned}
\end{equation}
We note that the adjoint scheme has only order two, which is the upper barrier for
three-stage third-order SSPRK methods \cite{HajianHintermuellerUlbrich2017}.

In the final experiment, we solve an optimal control problem with a discontinuous
target, proposed in \cite{HajianHintermuellerUlbrich2017}.
The discrete  adjoint $\bm{p}^0$ provides gradient information, which can be directly
used to set up the following algorithm:
\begin{itemize}
\item[0.] Given a control $\bm{u}_0:=\bm{u}^{(j)}$ at iteration $j$.
\item[1.] Compute the discrete adjoint $\bm{p}^0(\bm{u}^{(j)})$ and update
$\bm{u}^{(j+1)}=\bm{u}^{(j)}-\alpha_j\bm{p}^0(\bm{u}^{(j)})$ with $\alpha_j$ such
that Armijo's condition
\[ J(\bm{y}^{n_T}(\bm{u}^{(j+1)}),\bm{y}_d) \le J(\bm{y}^{n_T}(\bm{u}^{(j)}),\bm{y}_d) -
\frac12\alpha_j \| \bm{p}^0(\bm{u}^{(j)}) \|^2_{L^2(I)}\]
is fulfilled. If it is not satisfied, choose $\alpha_j:=0.95\,\alpha_j$ and check
the condition again.
\item[2.] Stop if $|J(\bm{y}^{n_T}(\bm{u}^{(j+1)}),\bm{y}_d)-
J(\bm{y}^{n_T}(\bm{u}^{(j)}),\bm{y}_d)|\le tol$. Otherwise set $j:=j+1$ and proceed
with step 1.
\end{itemize}
In general, taking the adjoint as a decent direction may increase the complexity of the
optimization process due to the production of additional discontinuities
\cite{CastroPalaciosZuazua2008,LecarosZuazua2014,LecarosZuazua2016}. A careful choice of
the initial guess $\bm{u}_0$ can remedy this serious problem. We follow the approach
proposed in \cite{HertyKurganovKurochkin2015} and first solve the conservation
law
\begin{equation}\label{claw-init}
\begin{aligned}
\partial_t z + \partial_x f(z) =&\; 0,\quad (t,x)\in\Omega_T,\\
z(0,x) =&\;y_d(-x),
\end{aligned}
\end{equation}
where $y_d$ is the target given in (\ref{problem}). The initial guess is then chosen
as $u_0=z(T,-x)$. Formally, as pointed out in \cite{HertyKurganovKurochkin2015},
(\ref{claw-init}) is obtained by reverting $t$ and $x$ in (\ref{hypEqIntro}) and taking $y_d$
as initial condition. The advantage of this approach is that it delivers a control whose
entropy solution is close to the target and the location of the discontinuities almost
coincide. Hence, the production of additional discontinuities within each iteration
step is avoided, which improves the performance of the algorithm drastically. We will
exemplify the influence of the choice of the initial guess in our optimal control problem.

\subsection{Order Test for the Discrete Adjoint for Smooth Data}
This section is devoted to numerically verify the third-order convergence of the adjoint
WENO3 scheme. For this purpose, we choose the computational domain
$\Omega_T=(0,0.5]\times [-1.5,1.5]$ and the objective functional
\begin{equation}
J(y(0.5,\cdot;u_0),0) = \frac12\,\int_{-\frac32}^{\frac32} y(0.5,x;u_0)^2\,dx
\end{equation}
with the smooth initial data
\begin{equation}
u_0(x)=\begin{cases}
e^{-\frac{1}{1-x^2}} & ,  |x| < 1, \\
0 & , |x| \geq 1.
\end{cases}
\end{equation}
The exact solution $y(t,x)$ can be directly computed from the method
of characteristics, i.e., $y(t,x)=u_0(x_0(x(t),t))$ with $x_0(x(t),t)$
being the solution of the nonlinear equation $x(t)=x_0+u_0(x_0)\,t$.
A reference solution $y_T\approx y(0.5,x)$ at the final time is computed
by Newton's method with a high tolerance $10^{-14}$.

Since shocks are not present, we find $p^T(x)=y(0.5,x)$ in (\ref{revSol}).
We also note that the characteristics curves of the adjoint problem coincide
with the characteristic curves of the forward problem. Thus, the
corresponding reversible solution $p(0,x)$ at time $t\!=\!0$ is
given by $u_0(x)$, which serves as reference solution for the adjoint.

We use a sequence of spatial meshes with a number of grid points
$N=150\cdot 2^i,\;i=0,\ldots,6,$ and set $\Delta t=0.5\,\Delta x$. In
order to keep the temporal error below $\Oh((\Delta x)^3)$, we apply
the classical fourth-order four-stage explicit Runge-Kutta method (ERK4).
Its adjoint time discretization has also order four \cite{Hager2000} for
smooth solutions and therefore the overall scheme is suitable to check
the order three of the adjoint WENO3 method. We also present results for
the forward WENO3 method to document the error of the approximated starting value
$\bm{p}^{n_T}=\bm{y}^{n_T}$. The $L^\infty$-errors collected in
Tab.~\ref{tab-prob1-smooth} clearly show asymptotic order three of the spatial WENO3
discretization for both forward and adjoint numerical solution.

\begin{table}[ht!]
\begin{center}
\begin{tabular}{|r|c|c|c|c|}\hline
\rule{0mm}{0.4cm}N & $\|\bm{y}_T-\bm{y}^{n_T}\|_\infty$ & rate
& $\|\bm{u}_0-\bm{p}^0\|_\infty$ & rate \\ \hline%
\rule{0mm}{0.4cm} $150$ & $2.00e\!-\!3$ & & $7.39e\!-\!3$ & \\
\rule{0mm}{0.4cm} $300$ & $3.25e\!-\!4$ & $2.63$ & $9.37e\!-\!4$ & $2.98$ \\
\rule{0mm}{0.4cm} $600$ & $2.64e\!-\!5$ & $3.62$ & $7.14e\!-\!5$ & $3.71$ \\
\rule{0mm}{0.4cm}$1200$ & $2.16e\!-\!6$ & $3.62$ & $4.30e\!-\!6$ & $4.05$ \\
\rule{0mm}{0.4cm}$2400$ & $2.76e\!-\!7$ & $2.97$ & $5.49e\!-\!7$ & $2.97$ \\
\rule{0mm}{0.4cm}$4800$ & $3.46e\!-\!8$ & $2.99$ & $6.92e\!-\!8$ & $2.99$ \\
\rule{0mm}{0.4cm}$9600$ & $4.33e\!-\!9$ & $3.00$ & $8.66e\!-\!9$ & $3.00$ \\
\hline
\end{tabular}
\vspace{-2mm}
\parbox{13cm}{
\caption{Burgers problem with smooth initial data and smooth solution:
$L^\infty$-error of the forward solution $\|\bm{y}_T-\bm{y}^{n_T}\|_\infty$
at the final time $T=0.5$ and adjoint solution $\|\bm{u}_0-\bm{p}^0\|_\infty$
at time $t=0$ for a sequence of spatial meshes with $N=150,300,\ldots,9600$
grid points. The convergence rates are computed from $\ln(E_N/E_{2N})/\ln(2)$,
where $E_N$ stands for the corresponding error.}
\label{tab-prob1-smooth}}
\end{center}
\end{table}

\subsection{Approximation of the Discrete Adjoint in the Case of Shocks}
We now consider discontinuous solutions with shocks. Our test case
is taken from Example \ref{exRevCons} with computational domain
$\Omega_T=(0,0.5]\times [-1,1]$. The reversible solution $p(0,x)$ at $t\!=\!0$
is given by (\ref{exAdjointt}). We apply the above described forward and
adjoint LF, EO, and
WENO3 schemes with $\Delta x=0.01,\;0.002$, and $\Delta t=0.25\,\Delta x$.
The corresponding numerical approximations $\bm{p}^0$ are shown in
Fig.~\ref{pic:shock1}.

\begin{figure}[ht]
\centering
\definecolor{mycolor1}{rgb}{0.00000,0.44700,0.74100}%
\definecolor{mycolor2}{rgb}{0.85000,0.32500,0.09800}%
\definecolor{mycolor3}{rgb}{0.92900,0.69400,0.12500}%
\definecolor{mycolor4}{rgb}{0.49400,0.18400,0.55600}%
\input{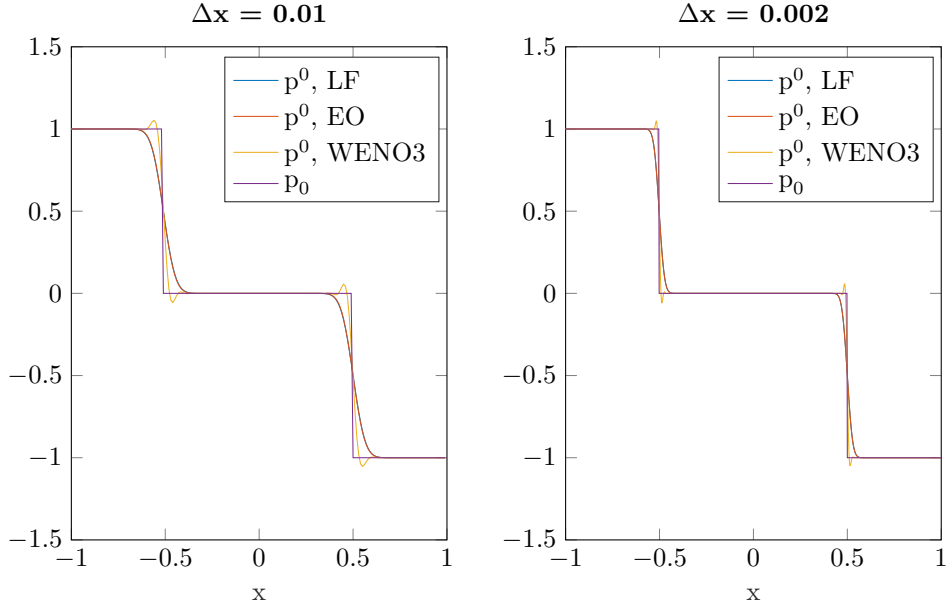}
\parbox{13cm}{
\caption{Burgers problem with discontinuous initial and final solution
taken from Example \ref{exRevCons}.
Numerical approximations $\bm{p}^0$ to the reversible solution
$p_0:=p(0,x)$ given in
(\ref{exAdjointt}) for the adjoint Lax-Friedrichs (LF), Engquist-Osher (EO)
and WENO3 scheme applied with $\Delta x=0.01$ (left), $\Delta x=0.002$
(right), and $\Delta t=0.25\,\Delta x$.}
\label{pic:shock1}}
\end{figure}

The first-order LF and EO schemes smear out the discontinuities, but deliver
$L^\infty$-stable approximations and thus respect the analytical property of
the adjoint. In the spirit of WENO schemes, the adjoint WENO3 delivers a quite
sharp resolution of the shocks at the price of bounded over- and undershoots
of around $5\%$. In Tab.~\ref{tab-prob2-shock1}, we plot the $L^\infty$-error
in the shock funnel for $x\in[-0.3,0.3]$. All schemes converge quite rapidly.
Note that convergence in the shock funnel is not always achieved since the
interior boundary condition at shock positions as given in (\ref{averagedEndData})
does not appear on the discrete level, see the discussions in
\cite{AguilarSchmittUlbrichMoos2019,GilesUlbrich2010a,GilesUlbrich2010b}.

\begin{table}[ht!]
\begin{center}
\begin{tabular}{|l|l|l|l|}\hline
\rule{0mm}{0.4cm} $\Delta x$ & LF & EO & WENO3 \\ \hline
\rule{0mm}{0.4cm} $0.01$  & $4.91e\!-\!05$ & $2.45e\!-\!05$ & $3.92e\!-\!05$\\
\rule{0mm}{0.4cm} $0.002$ & $2.26e\!-\!17$ & $5.79e\!-\!20$ & $6.51e\!-\!16$\\
\hline
\end{tabular}
\vspace{-2mm}
\parbox{13cm}{
\caption{Burgers problem with discontinuous initial and final solution
taken from Example \ref{exRevCons}. $L^\infty$-error of the adjoint solution $\|\bm{p}_0-\bm{p}^0\|_\infty$ at time $t=0$ in the shock funnel
$x\in[-0.3,0.3]$ for $\Delta x=0.01,\;0.002$.}
\label{tab-prob2-shock1}}
\end{center}
\end{table}

\subsection{Optimal Control Problem with Discontinuous Target}
We consider the optimal control problem (\ref{problem})
with the objective functional \cite{HajianHintermuellerUlbrich2017}
\begin{equation}\label{prob3_obj}
J(y(0.5,\cdot;u_0),y_d(x)) = \frac12\,\int_{-1}^{1} |y(0.5,x;u_0)-y_d(x)|^2\,dx
\end{equation}
and the discontinuous target $y_d$ defined by
\begin{equation}
y_d(x)=\begin{cases}
2x-\frac12 & , \frac14\le x\le\frac34, \\
0 & , \text{otherwise}.
\end{cases}
\end{equation}
The optimal control $u_0^\star$, which serves as a reference solution, is
\begin{equation}
u_0^\star(x)=\begin{cases}
-2x+\frac32 & , \frac14\le x\le\frac34, \\
0 & , \text{otherwise}.
\end{cases}
\end{equation}
We will present results for two mesh sizes $\Delta x=0.005,\;0.002$, and
time steps $\Delta t=0.25\,\Delta x$. The initial guess for the control
is computed from (\ref{claw-init}) with the individual method under
consideration. For WENO3 and the coarser mesh size, it is shown
in Fig.~\ref{pic:ctr1} together with the corresponding state solution.

In Fig.~\ref{pic:ctr2}, the results of the gradient based optimization
procedure described above for tolerances $tol_1=10^{-5}$, $tol_2=10^{-7}$,
and mesh size $\Delta x=0.005$ for the adjoint WENO3 method are plotted.
We can conclude that the adjoint WENO3 method allows to recover the initial
data together with the final
state solution adequately. The shock of the target is sharply resolved
and the rarefaction of the initial data is also recovered. In order to compare
these results with those obtained from the LF and EO schemes, we perform $50$
iterations of the optimization algorithm for both mesh sizes. The calculated
optimal controls and their corresponding final state solutions are collected
in Fig.~\ref{pic:ctr3}. The adjoint WENO3 method resolves the shock sharply.
In contrast, the LF method is too diffusive and only provides an unsatisfactory
shock resolution. The numerical artifacts around the shocks are huge. The
optimized final state solution obtained by the EO scheme possesses very small
numerical artifacts, but the shock is less sharply resolved and the spike of
the target is slightly smeared out.
In Tab.~\ref{tab-prob3-objfunc}, we depict the iteration history for all runs of the optimization.
In every case, the LF method performs poorer than the others. In terms of a low cost
functional, the adjoint WENO3 method performs best. We also see the influence of the initial
guess on the performance of the algorithm. This is due to the fact that the use of
$\bm{u}_0=0$ as starting control value produces artificial discontinuities within
each iteration step.

\begin{figure}[ht]
\centering
\definecolor{mycolor1}{rgb}{0.00000,0.44700,0.74100}%
\definecolor{mycolor2}{rgb}{0.85000,0.32500,0.09800}%
\hspace*{-1cm}\input{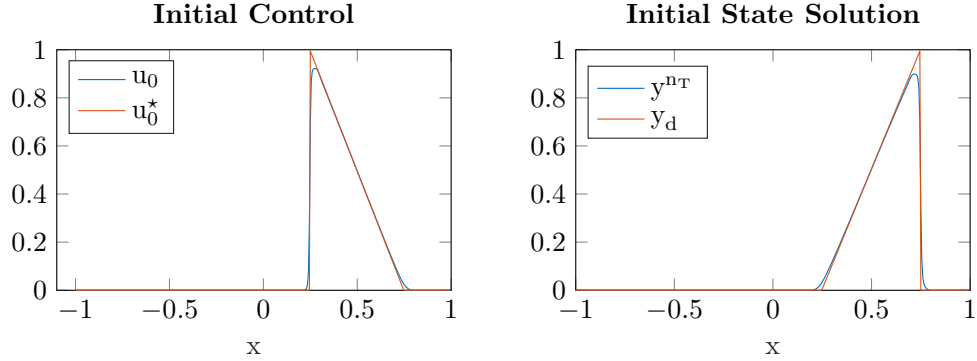}
\parbox{13cm}{
\caption{Optimal control problem. Initial control $\bm{u}_0$ and
optimal control $u_0^\star$ (left), initial state solution $\bm{y}^{n_T}$
at $T=0.5$ and target $y_d$ (right), computed with the WENO3 scheme
and mesh size $\Delta x=0.005$.}
%and mesh size $\Delta x=0.005$.\\[0.4cm]}
%Bemerkung: \\[0.4cm] funktioniert nicht in arXiv!
\label{pic:ctr1}}
\end{figure}

\vspace{0.3cm}

\begin{figure}[h!]
\centering
\definecolor{mycolor1}{rgb}{0.00000,0.44700,0.74100}%
\definecolor{mycolor2}{rgb}{0.85000,0.32500,0.09800}%
\hspace*{-1cm}\input{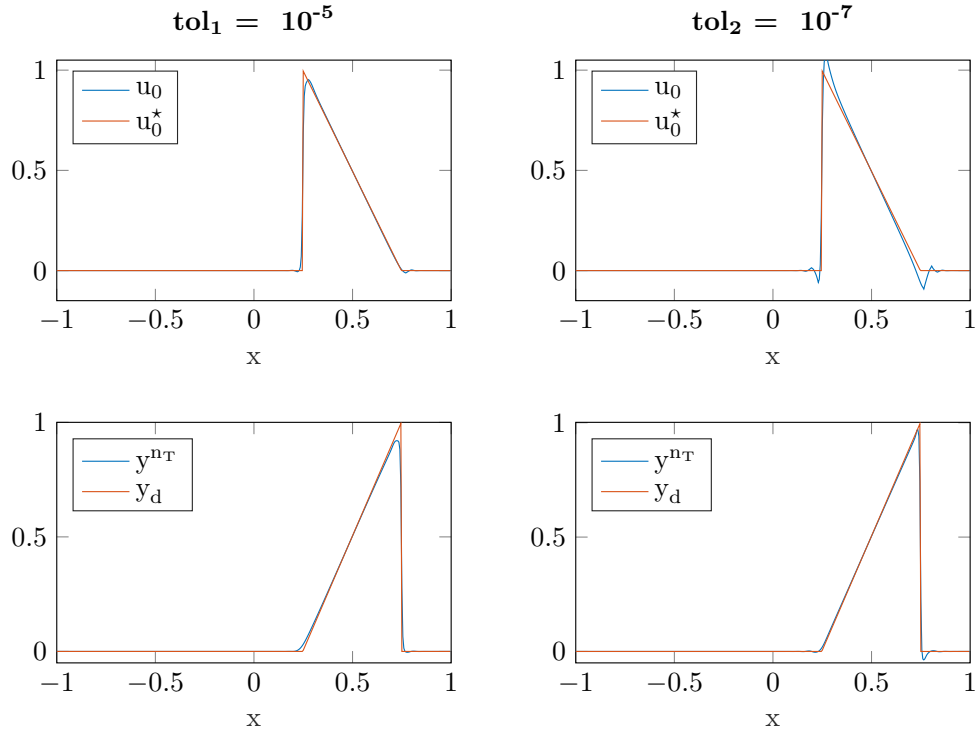}
\vspace{-5mm}
\parbox{13cm}{
\caption{Optimal Control Problem. Optimal control $u_{0}^\star$
and target $y_d$, numerically computed optimal control $\bm{u}_0$ and
corresponding state solution $\bm{y}^{n_T}$ for tolerances
$tol_1=10^{-5}$ (left) and $tol_2=10^{-7}$ (right) using WENO3
with mesh size $\Delta x=0.005$.}
\label{pic:ctr2}}
\end{figure}

\begin{figure}[ht!]
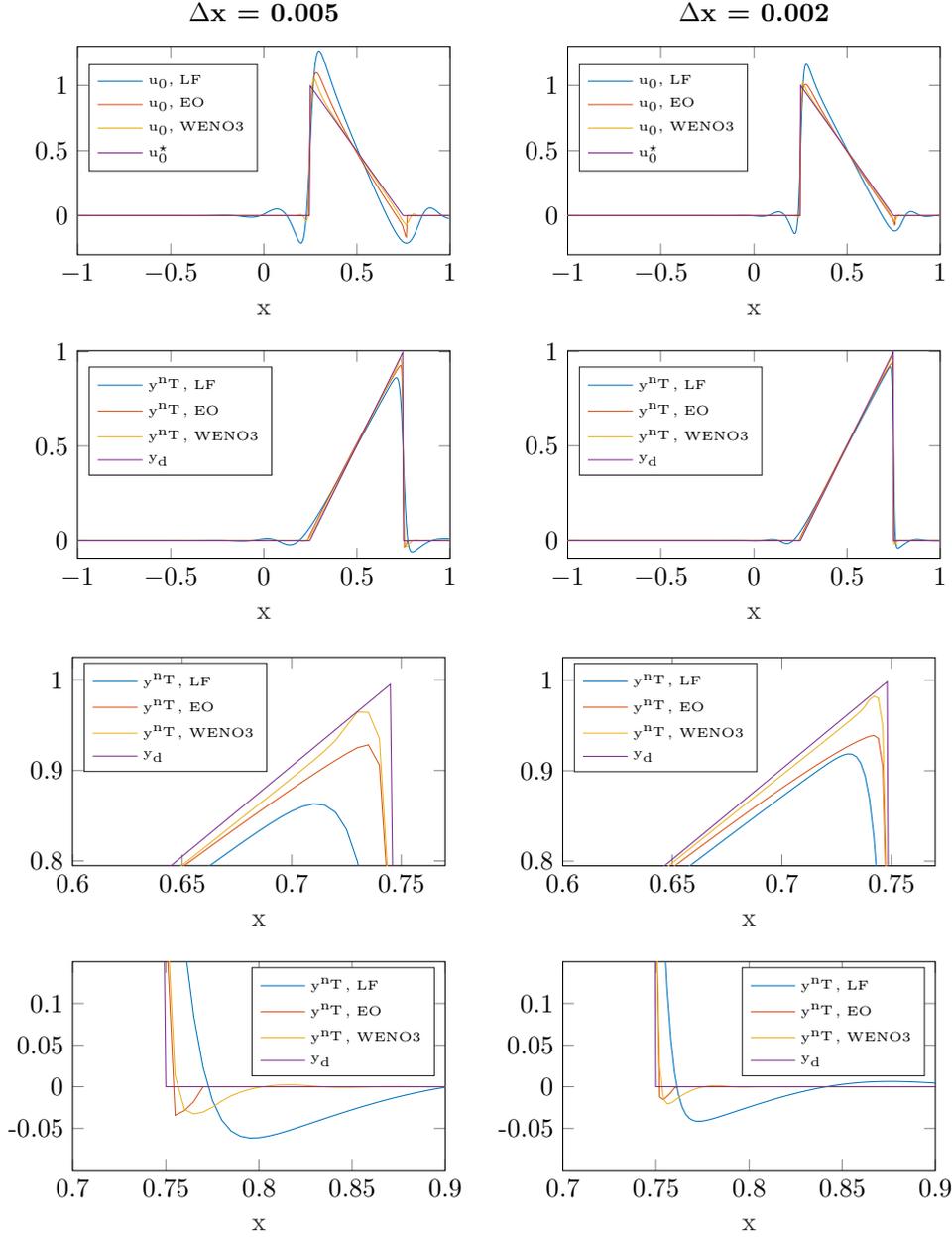

\centering
\definecolor{mycolor1}{rgb}{0.00000,0.44700,0.74100}%
\definecolor{mycolor2}{rgb}{0.85000,0.32500,0.09800}%
\definecolor{mycolor3}{rgb}{0.92900,0.69400,0.12500}%
\definecolor{mycolor4}{rgb}{0.49400,0.18400,0.55600}%
\hspace*{-1cm}\input{pic5.tikz}\\
\vspace{-5mm}
\hspace*{-1cm}\input{pic6.tikz}
\vspace{-5mm}
\parbox{13cm}{
\caption{Optimal control problem. Computed optimal
control functions $\bm{u}_0$ (top) and corresponding
state solution $\bm{y}^{n_T}$ (above the middle) with a zoom into
the shock region (below the middle, bottom) for $50$
iterations of the gradient based optimization algorithm, using
LF, EO, and WENO3 scheme with mesh size $\Delta x=0.005$ (left)
and $\Delta x=0.002$ (right).}
\label{pic:ctr3}}
\end{figure}

\begin{table}[ht!]
\begin{center}
\begin{tabular}{|l|c|c|c|c|}\hline
\rule{0mm}{0.4cm} & LF & EO & WENO3 & WENO3, $\bm{u}_0=0$ \\ \hline
\rule{0mm}{0.4cm} $\Delta x=0.005$   &&&& \\
\hspace{0.3cm} $\text{log} (J_0)$    & $-4.68$ & $-5.76$ & $-7.30$ & $-2.48$ \\
\hspace{0.3cm} $\text{log} (J_{50})$ & $-6.14$ & $-7.80$ & $-8.01$ & $-6.14$ \\ \hline
\rule{0mm}{0.4cm} $\Delta x=0.002$   &&&& \\
\hspace{0.3cm} $\text{log} (J_0)$    & $-5.46$ & $-6.47$ & $-8.34$ & \\
\hspace{0.3cm} $\text{log} (J_{50})$ & $-7.02$ & $-8.55$ & $-8.96$ & \\
\hline
\end{tabular}
\vspace{-2mm}
\parbox{13cm}{
\caption{Optimal control problem. Logarithmic values of the objective functional
(\ref{prob3_obj}) at the beginning and after $50$ iterations of the
optimization algorithm, $J_0$ and $J_{50}$, respectively. For
comparison, values for an initial control $\bm{u}_0=0$ for WENO3 are shown, too.}
\label{tab-prob3-objfunc}}
\end{center}
\end{table}

\section{Summary}
We have developed a novel adjoint WENO3 scheme to provide approximations of the gradient for optimal control problems governed by hyperbolic conservation laws and proved third-order consistency in space for sufficiently smooth solutions. The adjoint WENO3 method is able to sharply resolve discontinuities of reversible solutions. For an exemplary optimal control problem with discontinuous target, the method works very well and outperforms common first-order methods as the Lax-Friedrichs and Engquist-Osher schemes.

\section{Acknowledgement}
This work was supported by the Graduate School CE within the Centre for Computational
Engineering at Technische Universit\"at Darmstadt and by the Deutsche Forschungsgemeinschaft
(DFG, German Research Foundation) within the collaborative research centre
TRR154 {\em ``Mathematical modeling, simulation and optimisation using
the example of gas networks''} (Project-ID 239904186, TRR154/2-2018, TP B01).

\bibliographystyle{plain}
\bibliography{biboptctrweno}
\end{document}